
\documentclass[10pt,twocolumn,letterpaper]{article}

\usepackage[pagenumbers]{cvpr} 

%
%
\usepackage[dvipsnames]{xcolor}


%
\definecolor{cvprblue}{rgb}{0.21,0.49,0.74}
\usepackage[pagebackref,breaklinks,colorlinks,citecolor=cvprblue]{hyperref}


\title{Improving Autoencoder Image Interpolation via Dynamic Optimal Transport}


\author{
    Xue Feng\\
    University of California, Davis\\
    {\tt\small xffeng@ucdavis.edu}
    \and
    Thomas Strohmer\\
    University of California, Davis\\
    {\tt\small strohmer@math.ucdavis.edu}
}

\DeclareMathOperator{\Mass}{Mass}
\DeclareMathOperator{\MassLoss}{MassLoss}

\usepackage{algorithm}
\usepackage{algpseudocode}
\usepackage{comment}

\usepackage{graphicx}
\usepackage{subcaption}
\usepackage[dvipsnames]{xcolor}         
\usepackage[export]{adjustbox}

\usepackage{amsthm}
\setcounter{MaxMatrixCols}{20}
\newtheorem{theorem}{Theorem}
\theoremstyle{definition}

\newcommand{\E}{\mathbb{E}}
\newcommand{\D}{\mathbb{D}}
\newcommand{\R}{\mathbb{R}}

\newcommand{\itoj}{i \rightarrow j}
\newcommand{\m}{\mathbf{m}}
\newcommand{\Q}{\mathbf{w}}
\newcommand{\w}{\mathbf{w}}

\begin{document}
\maketitle
\begin{abstract}
Autoencoders are important generative models that, among others, have the ability to interpolate image sequences.
However, interpolated images are usually not semantically meaningful.
In this paper, motivated by dynamic optimal transport,
we consider image interpolation as a mass transfer problem and propose a novel regularization term to penalize non-smooth and unrealistic changes in the interpolation result.
Specifically, we define the path energy function for each path connecting the source and target images.
The autoencoder is trained to generate the $L^2$ optimal transport geodesic path when decoding a linear interpolation of their latent codes.
With a simple extension, this model can handle complicated environments, such as allowing mass transfer between obstacles and unbalanced optimal transport. 
A key feature of the proposed method is that it is physics-driven and can generate robust and realistic interpretation results even
when only very limited training data are available.

\end{abstract}    
\section{Introduction}
\label{sec:intro}

Autoencoders are known to uncover the underlying structure of a dataset by learning to encode data points to lower-dimensional latent codes which can then be decoded to reconstruct the input data.
In more recent studies, evidence has emerged demonstrating the robust capabilities of autoencoders in the domain of generative modeling~\cite{kingma2014auto,advae}.
They also have been used successfully to interpolate between data points
by decoding a convex combination of their latent codes.
This is useful for many applications, such as image sequence interpolation, also known as frame interpolation.

However, this process of image interpolation by decoding a convex combination of their latent codes often
leads to artifacts or yields unrealistic outcomes.
See \Cref{fig:typical} for an example.
There are two prevalent strategies to improve the quality of the interpolated result.
The first approach is to introduce a regularization loss term to penalize unrealistic results, such as an adversarial term~\cite{advae, berthelot*2018understanding,sander2022autoencoding}.
In the adversarial regularization method,
a {\em critic network} is trained to evaluate certain criteria, such as the similarity between the interpolated image and the training data, while an autoencoder is concurrently trained to fool this critic network.
A second approach consists of shaping the latent representation to follow a manifold consistent with the training images \cite{kingma2014auto,sainburg2018generative, bouchacourt2018multi,shu2018deforming,oring2020autoencoder}.
The idea is 
that the incongruities in the interpolation result are probably caused by the fact that such straightforwardly interpolated latent vectors stray from the data manifold.
So shaping the latent space may be benifical.

In this paper, we propose a
regularization term that aligns with the first strategy, using a robust physics model to enhance the interpolated images produced by decoding a linear combination of latent codes. 
While there are numerous possibilities to define some notion of {\em data sequence interpolation},
it is natural to consider the image interpolation problem as a mass transfer problem.
For example, in a grayscale image, the value of the pixels represents the mass at that location. In this context, 
image interpolation is the process of transferring the mass from the source image to the target image. 
There are infinitely many possible paths to transfer the mass. 
We define the best path as the one with the least transportation cost.

In the past decade, optimal transport (OT) has developed into a highly-regarded research field in machine learning areas such as generative modeling \cite{arjovsky17a}, domain adaption\cite{courty2014domain}, and image interpolation. The traditional optimal transport defines a family of metrics, known as the Wasserstein distance, between probability distributions. We can solve the optimal transport plan and then use it to push the source image to get the interpolation path. Nevertheless, the metric itself cannot directly assess the quality of the interpolated images.

Our motivation comes from the dynamic OT proposed by Benamou and Brenier~\cite{benamou2000}.
Dynamic OT considers the mass transfer problem in a fluid mechanics framework
and associates each path with a kinetic energy cost.
The path with minimal energy is the so-called geodesic path according to the Wasserstein metric.
The continuity equation constrain ensures that
the path will follow the law of physics.
Meanwhile,
due to its fluid mechanics nature,
dynamic OT can handle complex scenarios and model real-world problems, such as those involving obstacles or varying transport costs over time~\cite{benamou2000,papadakis2014optimal}.

\begin{figure*}[!ht]
    \centering
    \begin{subfigure}[b]{0.9\textwidth}
        \centering
        \includegraphics[width=0.9\textwidth]{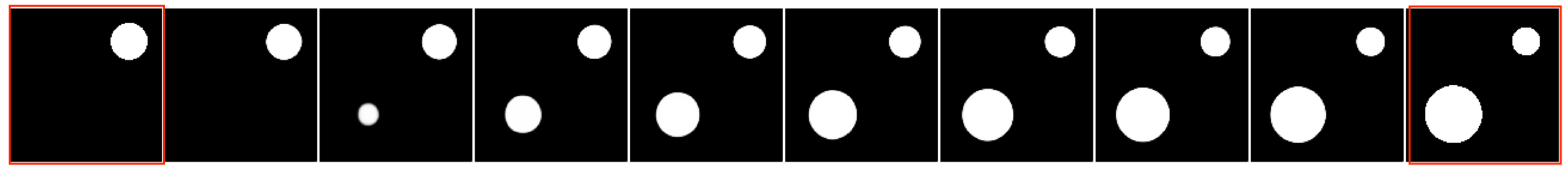}
        \caption{Interpolation results using a standard (vanilla) autoencoder. The changes between images are localized.}
        
        \label{fig:typical}
    \end{subfigure}

    \begin{subfigure}[b]{0.9\textwidth}
        \centering
        \includegraphics[width=0.9\textwidth]{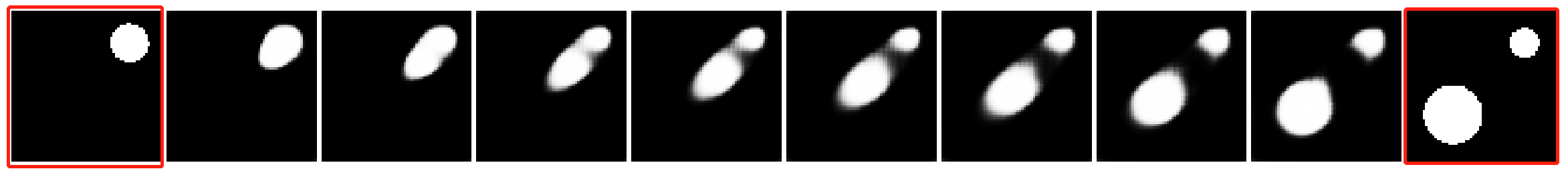}
        \caption{Interpolation results using our  method. The result shows a smooth transition between images that is consistent with the laws of physics.   }
        \label{fig:sub2}
    \end{subfigure}
\caption{Comparative analysis of image sequence interpolation. 
Given the images marked in red, the interpolated images are generated by decoding a linear combination of their latent codes after training.  }
    
    \label{fig:comp}
    
\end{figure*}

Inspired by this,
we define a path energy term by eliminating the momentum variable in the original dynamic OT model, and add it to the autoencoder as a regularization term to measure the performance of the generated path.
In this way, the interpolated path is trained to be consistent with physical principles.
Our contribution is summarized as follows:
\begin{itemize}
    \item Our regularization term is mathematically explainable and encourages the autoencoder to generate geodesic paths between images, even in an environment with obstacles. 
    This also brings a continuous and smooth morphing along the interpolated images.
    \item  We study the properties of the path energy term. It has a closed form in a discrete setting. Gradient properties are given to facilitate training.
    \item   We evaluate the effectiveness of our approach in a variety of scenarios. It is noteworthy that traditional deep learning techniques usually require a considerable amount of training data to accurately learn features. Additionally, conventional optimization techniques are usually restricted to interpolating between individual pairs of images. On the contrary, our method produces robust and smooth interpolation results in all cases.
    \item Our method can be regarded as a variant of dynamic OT, where the path variable is parameterized by the weight parameters of the autoencoder network. As we know, this is the first successful attempt to apply optimal transport with a large deep learning training set on interpolation problem.
\end{itemize}


Generative AI models, such as Stable Diffusion, DALL-E and ChatGPT, have been the focus of much attention in recent years. Despite their impressive results, these models require a large amount of training data. Unfortunately, in many cases, obtaining such data can be difficult, costly, or even impossible due to privacy issues. Our method is a type of physics-driven deep learning\cite{cai2021physics}, which is becoming popular in the fields of machine learning and computational physics recently. It can produce more reliable and realistic results, particularly in situations where data is scarce.

In section 2, we will discuss some related research. Section 3 will present our proposed approach, and section 4 will provide numerical results.

\subsection{Notation}

We briefly introduce some important notation of our work.
A standard autoencoder consists of an encoder
$z = \E (x)$ where $z$ is the latent code
and a decoder $\hat{x} = \D(z)$.
In this context, $z$ represents the {\em latent code}, a lower-dimensional representation of the input data $x$. 
The encoder and decoder are trained simultaneously to recover the input data,
most commonly by minimizing a loss function $||x-\hat{x}||^2$.

In this paper, 
we focus on image interpolation using an autoencoder by decoding a linear interpolation of two latent codes.
Specifically,
for a pair of images $x_i, x_j$ with the corresponding latent codes $z_i$ and $z_j$,
we define a new latent code 
$$
z_{\itoj}(t) = (1-t) z_i + t z_j,
\quad
t \in [0,1].
$$
Then, the output
$ \D( z_{\itoj}(t)  )$ by decoding $z_{\itoj}(t)$
describes a path showing how pixels from $x_i$ move to $x_j$ when $t$ increases from 0 to 1.

\section{Related Work}

\subsection{Classic Optimal Transport and Wasserstein Distance}

Optimal transport (OT) is a well-developed mathematical theory that defines a family of metrics between probability distributions,
with a wide range of potential applications and extensions~\cite{villani2009optimal,peyre2019computational}.
In this work, we only consider the problem of two-dimensional image interpolation.
Thus, without loss of generality, we
define two density functions $\rho_0(s) \geq 0$ and $\rho_T(s) \geq 0$ with $s \in [0,1]^2$, which we assume to be bounded by total mass one
$$
\int_{[0,1]^2} \rho_0(s) d s=\int_{[0,1]^2} \rho_T(s) d s=1.
$$
The associated
$L^p(p \geq 1)$  Wasserstein (or Kantorovich-Rubinstein) distance between $\rho_0$ and $\rho_T$ is defined by:
$$
d_p\left(\rho_0, \rho_T\right)^p=\inf _M \int|M(s)-s|^p \rho_0(s) d s,
$$
where the infimum is taken over all maps $M : [0,1]^2 \to [0,1]^2$ that push forward the measure $\rho_0 ds$ to $\rho_T ds$. 
The map $M$ that minimizes this objective, denoted as $M^*$, is known as the optimal transport map.
To interpolate between images, we can compute the optimal transport map and push forward the source image to the target image according to this map~\cite{bonneel2011displacement}:
$$
\rho(t, s)=\rho_0\left(M_t(s)\right)\left|\operatorname{det}\left(\partial M_t(s)\right)\right|, 
$$
where $M_t=(1-t) \operatorname{I}+t M^*$.
Here, $\rho(t, s)$ is the geodesic path between $\rho_0$ and $\rho_T$. In the context of image interpolation, $\rho(t)$ represents the interpolated image at any given time $t$.

\subsection{Dymamic Optimal Transport}

Dynamic optimal transport, initially proposed in \cite{benamou2000}, resets the mass transfer problem into a continuum mechanics framework.
The path $\rho(t)$ is described by advecting the measure using a vector field $\mathbf{v}(t)$,
with $\mathbf{v}$ and $\rho$ satisfying the continuity equation, AKA, the conservation of mass formula.
Dynamic optimal transport finds the geodesic path $\rho(t, s)$ between $\rho_0(s)$ and $\rho_T(s)$ by minimizing the kinetic energy along the path:
\begin{equation}
\begin{aligned}
\min_{\rho, \mathbf{v}} \quad & \frac{1}{2} \int_0^T  \int_{ [0,1] ^2} \rho(t, s)|\mathbf{v}(t, s)|^2 d s d t,\\
\textrm{s.t.} \quad & \partial_t \rho+\nabla \cdot(\rho \mathbf{v})=0, ,\\
  &\rho(0, \cdot)=\rho_0, \quad  \rho(T, \cdot)=\rho_T,    \\
\end{aligned}
\label{equ:original}
\end{equation}
where the first constraint is  the continuity equation, and the remaining are the initial and final conditions
Proper boundary conditions in the velocity field should be considered.
In our work, we consider the Dirichlet boundary condition, specifically zero at the boundary positions.
Extension to more complex fluid mechanics models and combining it with other metrics such as $L^2$ distance~\cite{benamou2000,papadakis2014optimal} is flexible,
by changing the objective function or constraints.

After introducing a new variable, momentum $\m=\rho \mathbf{v}$, 
the above problem can be reformulated as a convex problem as follows:
\begin{equation}
\begin{aligned}
\min_{\rho, \m} \quad & \frac{1}{2} \int_0^T \int_{ [0,1]^2}  \frac{|\m(t, s)|^2}{\rho(t, s) }d s d t,\\
\textrm{s.t.} \quad & \partial_t \rho+\nabla \cdot(\m)=0, \\
  &\rho(0, \cdot)=\rho_0, \quad \rho(T, \cdot)=\rho_T.    \\
\end{aligned}
\label{equ:convexreformulation}
\end{equation}

This problem remains challenging to solve since the objective function is nonsmooth.
After discretization, 
there are several ways to solve it,
such as the Douglas-Rachford algorithm and the
primal-dual method in \cite{papadakis2014optimal},
as well as the alternating direction multiplier method (ADMM) in \cite{benamou2000}.
The drawback is that it is computationally expensive.

The square of the $L^2$-Wasserstein distance is equal to $2T$ times the infimum of dynamic optimal transport defined in \eqref{equ:original}~\cite{benamou2000}.
Dynamic OT provides the geodesic point of view of OT
and allows us to explore the path space.
In the context of image sequence interpolation,
the optimal solution $\rho(t)$ gives the interpolated sequence directly. 

\subsection{Adversarial Regularizer}
The adversarial regularizer-based method is an important branch of autoencoder image interpolation. This technique utilizes generative adversarial networks (GANs) \cite{goodfellow2020generative} to produce high-quality interpolated images.
The generative model, here an autoencoder, produces the interpolated images, while the discriminative model, referred to as the critic, evaluates the generated images with certain criteria such as the similarity between the interpolated image and the
training data.
The output of the critic network is added as a regularization term for the autoencoder loss function,
helping it to refine the interpolated images further through joint training. 
Key research in this area includes works by \cite{advae,larsen2016autoencoding,
 berthelot*2018understanding, beckham2019adversarial}.
Despite the promising results these methods have achieved in refining image interpolation techniques, their data-driven approach significantly differs from our method which leverages robust mathematical models to ease the need for large amount of training data.

\section{Path Energy}

In this section, we explain the motivation to define path energy and how to apply it to the autoencoder, encouraging the interpolated results to be realistic.

As discussed earlier, given two images $x_i$ and $x_j$, the
autoencoder has the ability to generate a path $\D( z_{\itoj}(t)  ), t \in [0,1]$ by decoding a linear combination of their latent codes.
However, 
these generated images are usually not as realistic as expected. See Figure \ref{fig:typical} for an example.
Thus, we need an evaluation metric to penalize some "bad" paths.

The image interpolation problem can be interpreted as a mass transfer problem. 
For a gray image,
the value of the pixel represents the mass at that position.
Thus, dynamic OT can be adapted to the image interpolation problem.
The problem \eqref{equ:convexreformulation} minimizes over two variables $\rho, \m$,
with $\rho$ representing a path connecting $x_i$ and $x_j$ and $\mathbf{m}$ describing the momentum of fluid movement.
A natural idea is that for any given $\rho$, we can optimize through $\m$ to find the least motion momentum and use the kinetic energy as the path energy value associated with $\rho$.

Formally, for any given $\rho(t,s)$ satisfying $\rho(0, \cdot)=\rho_0, \rho(T, \cdot)=\rho_T$, we can define the path energy  over $\rho(t,s)$ based on normal optimal transport as  
\begin{equation}
\begin{aligned}
J(\rho) =\min_{\m} \quad &  \frac{1}{2}  \int_0^T \int_{ [0,1] ^2} \frac{|\m(t, s)|^2}{\rho(t, s) }d s d t\\
\textrm{s.t.} \quad & \partial_t \rho+\nabla \cdot(\m)=0. \\
\end{aligned}
\label{equ:pathenergy}
\end{equation}
Note that $J(\rho)$ is convex as \eqref{equ:convexreformulation} is a bivariate convex problem.

We consider a variation in this paper in which there are obstacles in the environment. \cite{papadakis2014optimal} proposed a generalized cost function with spatially varying weights, which can be accomplished by adding a constraint to make the momentum zero at all obstacle positions: $$ m(t,s) = 0,\quad \forall s \in C, $$ where $C$ is the set of obstacle positions.

Another variation we consider in this paper is unbalanced OT to address the case when the source and target images have different masses.
A general
methodology consists of introducing a source term $\mathfrak{s}$ into the continuity equation and penalizing it in the same way as the momentum \cite{chizat2018interpolating}. 
Formally,  the path energy  over $\rho(t,s)$ based on unbalanced OT is 
\begin{equation}
\begin{aligned}
J(\rho) =\min_{\m} \quad &  \frac{1}{2}  \int_0^T \int_{ [0,1] ^2} \frac{|\m(t, s)|^2}{\rho(t, s) } + \tau  \frac{|\mathfrak{s}(t, s)|^2}{\rho(t, s) }  d s d t\\
\textrm{s.t.} \quad & \partial_t \rho+\nabla \cdot(\m)=\mathfrak{s}, \\
\end{aligned}
\end{equation}
where $\tau$ is the source term weight.

\subsection{ Discretization and Solver}

Here, we discretize the time to $\{0,1,2,.., T\}$,
and assume that the space is uniformly discreted at the points $(i/n,j/n)\in [0,1]^2$.
Source and target densities are represented as $\rho_0, \rho_t \in \R^{n,n}$ and a path $\rho \in \R^{T+1,n,n}$.
We adopt a staggered grid discretization scheme, which is commonly used in fluid dynamics~\cite{papadakis2014optimal}.
The momentum vector $\m$ and its corresponding weight vector $\Q$ will be defined at half-grid points in each direction of space at time $t$.
We denote $\m_t = (\m^1_t, \m^2_t)$ where $\m^1_t\in \mathbb{R}^{n+1,n}$ and $\m^2_t  \in \mathbb{R}^{n,n+1}$
and $\Q_t = (\Q^1_t,\Q^2_t)$ where $\Q^1_t\in \mathbb{R}^{n+1,n}$ and $\Q^2_t  \in \mathbb{R}^{n,n+1}$.
Given a path $\rho$, $\Q_t$ is deterministicaly defined as follows
$$
\Q^1_{t,i,j} = \frac{2}{\rho_{t,i,j}  + \rho_{t,i+1,j} },
\quad \quad
\Q^2_{t,i,j} =  \frac{2}{\rho_{t,i,j}  + \rho_{t,i,j+1} }.
$$
Using the staggered grid discretization scheme, divergence operator 
associated with a vector field $\m_t $ in the linear constraint of problem \eqref{equ:pathenergy} is defined as:
$$
(\nabla \cdot \m_t)_{i,j} = \m^1_{t,i+1,j} - \m^1_{t,i,j} + \m^2_{t,i,j+1} - \m^2_{t,i,j}.
$$

After flattening the vectors $\m$ and $\Q$,
problem \eqref{equ:pathenergy} becomes:
\begin{equation*}
\begin{aligned}
J(\rho) = \min_{\m} \quad &  \sum_{t=0,1,2,..T-1} \m_t^T \operatorname{Diag} (\Q_t) \m_t\\
\textrm{s.t.} \quad & \nabla \cdot(\m_t)=b_t, t=0, 1,2,...T-1 \\
\end{aligned}
\end{equation*}
where $b_t=-\partial_t \rho = \rho_t - \rho_{t+1}$.

This is a quadratic problem with linear constraint, and its optimality condition (KKT condition) is given by 
\begin{equation}
\left[\begin{array}{cc}
\operatorname{Diag}(\Q_t) & \nabla \cdot ^{\top} \\
\nabla \cdot & 0
\end{array}\right]\left[\begin{array}{c}
\mathbf{m}_t \\
\lambda_t
\end{array}\right]=\left[\begin{array}{c}
\mathbf{0} \\
\mathbf{b_t}
\end{array}\right],
t=0,1,2,...,T-1,
    \label{equ:kkt}
\end{equation}
where $\lambda_t$ is the Lagrange multiplier.
After solving for $\m$ in the above linear system, we get a closed formula for the path energy defined in \eqref{equ:pathenergy}:
\begin{equation*}
\begin{aligned}
J(\rho) = &\sum_{t=0}^{T-1} b_t^T \left(\mathbf{\nabla \cdot \operatorname{Diag}(\Q_t)}^{-1} \mathbf{\nabla \cdot ^{\top} }\right)^{-1} b_t,
\end{aligned}
\label{equ:quadraticmomentum}
\end{equation*}
by assuming $\Q_t$ are positive.
The extensions to the obstacle case and unbalanced OT case are similar and can be found in \Cref{sec:uotandobs} in the supplementary material.

\subsection{Gradient Computation}

First, we denote $\nabla \cdot \operatorname{Diag}(\Q_t)^{-1} \nabla \cdot  ^{\top} $ as $A_t$,
which is a reweighted poison operator.
There are $T$ sparse linear systems to solve in $J(\rho)$:
\begin{equation}
A_t y =   b_t, 
\quad
t = 0, 1, 2, ... T-1.
\label{linearsystem}
\end{equation}
We show the gradient property of our path energy term in the following theorem,
which is important in the backpropagation of neural network training.

\begin{theorem}
The first-order gradient of the  path energy defined  in \eqref{equ:pathenergy} at time $t = 0, 1, 2, ... T-1$ is given by 
$$
(\frac{\partial J}{\partial \rho_t})_{i,j} = - \frac{1}{4} \sum_{(k,l) \in \mathcal{O}_{i,j}}  ( y_{t,k,l} -y_{t,i,j} )^2  +
 2y_{t,i,j}, 
$$
where $\mathcal{O}_{i,j}$ is the connected neighbor of $(i,j)$, i.e., $\{ (i-1,j),(i+1,j),(i,j-1),(i,j+1) \}$ and $y_t \in \R^{n\times n}$ is the solution of equation \eqref{linearsystem}.

\end{theorem}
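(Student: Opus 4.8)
The plan is to treat $J$ as a sum of per-timestep contributions $J_t(\rho) = b_t^{\top}A_t^{-1}b_t$ with $b_t = \rho_t - \rho_{t+1}$ and $A_t = \nabla\cdot\operatorname{Diag}(\w_t)^{-1}\nabla\cdot^{\top}$, and to differentiate a single such term with respect to the density $\rho_t$ at time $t$. The starting point is the observation that $\rho_t$ enters $J_t$ through two distinct channels: linearly, through the right-hand side $b_t$ of the linear system \eqref{linearsystem}, and nonlinearly, through the edge weights $\w_t$ that build the operator $A_t$ (recall $\w^1_{t,i,j} = 2/(\rho_{t,i,j}+\rho_{t,i+1,j})$, and similarly for $\w^2$). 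I expect these two channels to produce, respectively, the linear term $2y_{t,i,j}$ and the quadratic term $-\tfrac14\sum_{(k,l)\in\mathcal{O}_{i,j}}(y_{t,k,l}-y_{t,i,j})^2$ in the claimed formula.

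First I would invoke the standard matrix-calculus identity $\mathrm{d}(b^{\top}A^{-1}b) = 2\,y^{\top}\mathrm{d}b - y^{\top}(\mathrm{d}A)\,y$, valid because $A_t$ is symmetric and, by the positivity of $\w_t$, invertible, where $y = A^{-1}b$ is exactly the solution $y_t$ of \eqref{linearsystem}. Differentiating the affine map $b_t = \rho_t - \rho_{t+1}$ gives $\partial b_t/\partial\rho_{t,i,j} = e_{i,j}$, so the first summand contributes $2y_{t,i,j}$ at pixel $(i,j)$; this is the easy half. An equivalent and perhaps cleaner route is the envelope theorem applied to the KKT system \eqref{equ:kkt}: since $\m_t$ and $\lambda_t$ are optimal, one may differentiate the Lagrangian with the primal and dual optimizers held fixed, which immediately isolates the contribution of the constraint right-hand side from that of the objective weights.

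The substantive step is the operator channel, $-y_t^{\top}(\partial A_t/\partial\rho_{t,i,j})y_t$. Here I would rewrite the quadratic form using the staggered-grid adjoint of the divergence: since $\nabla\cdot^{\top}y$ assigns to each half-grid edge the difference of $y$ across that edge, one gets $y_t^{\top}A_t y_t = \sum_{e}\w_{t,e}^{-1}\,(\Delta_e y_t)^2$, a weighted sum over edges of squared neighbor-differences of $y_t$, with each weight $\w_{t,e}^{-1}$ equal to the average $\tfrac12(\rho_{t,p}+\rho_{t,q})$ of the two densities adjacent to edge $e$. Because this dependence on $\rho_t$ is linear and local, differentiating with respect to $\rho_{t,i,j}$ annihilates every term except the four edges incident to cell $(i,j)$, each contributing the constant derivative of its weight times $(\Delta_e y_t)^2$; collecting these four edges yields exactly a sum over the neighbor set $\mathcal{O}_{i,j}$ of the squared differences $(y_{t,k,l}-y_{t,i,j})^2$, with the overall constant fixed by the $2/(\cdot+\cdot)$ normalization of $\w_t$.

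I expect the main obstacle to be the bookkeeping in this last step: correctly identifying the staggered-grid adjoint $\nabla\cdot^{\top}$ as a signed neighbor difference, verifying that the contraction $y_t^{\top}(\partial A_t/\partial\rho_{t,i,j})y_t$ localizes to precisely the four edges touching $\mathcal{O}_{i,j}$, and pinning down the numerical constants (the factors of $2$ and $\tfrac14$) from the weight normalization and the objective's scaling. Once these constants are tracked consistently, adding the linear and operator contributions gives the stated gradient; I would finish by remarking that the positivity of $\w_t$ assumed in the closed form for $J(\rho)$ is exactly what guarantees invertibility of $A_t$ and hence differentiability throughout.
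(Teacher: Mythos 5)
Your proposal is correct and takes essentially the same route as the paper's proof: the paper likewise differentiates $J_t=b_t^{\top}A_t^{-1}b_t$ through exactly the two channels you identify, computing $\partial J/\partial\mathbf{u}=-(\nabla y_t)\odot(\nabla y_t)$ for the weight channel (with $\mathbf{u}$ the elementwise inverse of $\w_t$, via $\partial A^{-1}=-A^{-1}(\partial A)A^{-1}$) and $\partial J_t/\partial b_t=2A_t^{-1}b_t=2y_t$ for the right-hand-side channel, then combining them by the chain rule. The only differences are presentational: you differentiate the weighted edge sum $\sum_e \w_{t,e}^{-1}(\Delta_e y_t)^2$ directly rather than naming the intermediate variable $\mathbf{u}$, and you spell out the edge-locality and constant bookkeeping that the paper compresses into ``using the chain rule, the result is straightforward.''
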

\begin{proof}
At time t,
let $\mathbf{u}$ represent the elementwise inverse of $\w$, that is, each element of $\mathbf{u}$ is the reciprocal of the corresponding element in $\w$.
Then, we compute
$$
\begin{aligned}
\frac{\partial J}{\partial \mathbf{u}} &=\frac{\partial}{\partial \mathbf{u}} b_t^{\top} A^{-1} b_t \\
& =b_t^{\top} ( \frac{\partial}{\partial \mathbf{u}} A_t^{-1} ) b_t \\
 &=-(y_t^{\top} \mathbf{\nabla \cdot } )\frac{\partial}{\partial \mathbf{u}}  \operatorname{Diag}(\mathbf{u}) (\mathbf{\nabla \cdot ^{\top} }  y_t ) \\
 &=- (\nabla y_t) \odot (\nabla y_t),
\end{aligned}
$$
where $\odot$ is the Hadamard (entrywise) product.
Note that the transpose of a discrete divergence operator is the gradient operator.

Secondly, $\frac{\partial J_t}{\partial b_t} = (A_t^{-1} + (A_t^{-1})^{T}) b_t = 2 A_t^{-1}b_t = 2y_t$.
Using the chain rule, the result is straightforward.
\end{proof}

\subsection{Proposed Algorithm}
\label{algosection}
The goal of this work is to improve the autoencoder image interpolation result by $\D( z_{\itoj}(t)  )$, which decodes a linear combination of latent codes  of image $x_i$ and $x_j$. 
To that end,
we propose a new approach, which adds the path energy defined in \eqref{equ:pathenergy} as a regularization term when reconstructing the input using an autoencoder.
This encourages $\D( z_{\itoj}(t)  )$  to reach the geodesic path between $x_i$ and $x_j$,
and also brings about a smooth transportation effect, which will be shown in the experiments.
Here $x_i$ and $x_j$ can be any pair of data in the training dataset.

However,
there are some issues with the path $\D( z_{\itoj}(t)  )$ generated by the autoencoder.
First,
the generated path does not necessarily meet the mass-preserving properties, which are required when adapting the image interpolation problem to a mass transfer problem in the normal dynamic OT setting.
The initial dimensions of the pictures may be too big to be plugged into the path energy term.
To address this, downsampling the path $\D( z_{\itoj}(t)  )$ would be beneficial. 
Furthermore, to ensure that the KKT system \eqref{equ:kkt} has a solution,
the weight $\mathbf{w}_t$ must be positive.
Thus, some pre-processing on $\D( z_{\itoj}(t)  )$ is necessary.

The full procedure of our algorithm is presented in Algorithm \ref{alg}. The Sigmoid activation at the last layer of the autoencoder guarantees a non-negative output. Therefore, we measure the reconstruction loss by computing the binary cross entropy BCE$(x_i,\hat{x}_i)$ between the input data $x_i$ and the autoencoder output $\hat{x}_i$.
$J$ is the path energy defined in \eqref{equ:pathenergy},  $D_{\itoj}$ refers to the path $\D( z_{\itoj}(t)  ), t\in [0,1]$ after preprocessing, and 
\small
     \[
     \begin{aligned}
         \MassLoss_{\itoj} &= \sum_{t=1}^{T-1} | \Mass( \D( z_{\itoj}(t)  ) )\\
         &- (t \Mass(x_i)+(1-t) \Mass(x_j)) |,
     \end{aligned}         
     \]
\normalsize
with $
\Mass (x)= \sum_{i,j} |x_{i,j}|. 
$
The term $\MassLoss_{\itoj}$ is used to constrain the mass of the generated path.
For computational efficiency,
the choice of the indices $i,j$ can be a random subset of the entire data set.
For motion-dominated interpolation problems, we use the path energy based on normal optimal transport, and for shape-dominated interpolation problems, we use the path energy based on unbalanced optimal transport (see  \Cref{otvsuot} in supplementary materials for more explanation).

After training,
we generate interpolation images between $x_i$ and $x_j$ by decoding a convex combination of their latent codes.

\begin{algorithm}
\caption{Our proposed method for autoencoder image interpolation}\label{alg}
\begin{algorithmic}[1]
\State \textbf{Input:} training image dataset X, an autoencoder with Sigmoid activation at the last layer
\State \textbf{Step 1: training process:}
\For{each epoch}
    \State generate $D( z_{i \to j}(t) ), t=1,2,\dots,T-1$ for the chosen $i,j$ pairs
    \State (optional) downsample the $D( z_{i \to j}(t) )$
    \State threshold the $D( z_{i \to j}(t) )$ with positive value $\epsilon$
    \State (optional) normalize $D( z_{i \to j}(t) )$ with a standard mass
    \State compute the loss function as follows and backpropagate:
    \begin{equation}
    \scriptsize
        \sum_i \text{BCE}(x_i,\hat{x}_i) + \alpha \sum_{i,j} J( D_{i \to j} )  + \beta \sum_{i,j} \text{MassLoss}_{i \to j}
    \end{equation}
\EndFor
\State \textbf{Step 2: interpolation process:}
\State generate interpolated images between $x_i$ and $x_j$ by decoding a convex combination of their latent codes 
\[ 
D( (1-t) z_i + t z_j ), \quad
0 < t < 1.
\]
\end{algorithmic}
\end{algorithm}

\textbf{Note:} For more detailed explanations of the steps and the used functions, please refer to Section \ref{algosection}.

\section{Numerical Experiments}
In this section, we evaluate the effectiveness of our approach in a variety of scenarios, from the most extreme (two training samples) to the moderate (a few training samples) to the large (MNIST). It is noteworthy that traditional deep learning techniques usually require a considerable amount of training data to accurately learn features. Additionally, conventional optimization techniques are usually restricted to interpolating between individual pairs of images. In contrast, our method, based on the principles of physics-driven deep learning, produces robust and smooth interpolation results in all cases.

The experimental setup is as follows, unless otherwise specified.
For all experiments,
we adopt an autoencoder architecture similar to that of ACAI~\cite{berthelot*2018understanding}.
The encoder consists of 3 blocks of two $3 \times 3$ convolutional layers followed by a $2 \times 2$ average pooling. 
The number of channels is doubled before each average pooling layer.
This is followed by two more $3 \times 3$ convolutional layers,
and the final output is used as the latent code whose dimension is $16\times 4 \times 4$.
All of the convolution layers, except the final one, use a leaky ReLU activation.
The decoder has 3 similar blocks with average pooling replaced by upsampling layer and channel halved.
Two more convolutional layers are then performed, the last having a sigmoid activation.
We implemented our codes in Pytorch and use the package scipy.sparse.spsolve to solve linear systems in the path energy term. 
The parameters are optimized with Adam with the default parameters.

The threshold value $\epsilon$ is set to $1e-5$.
We adjust the values of the path energy weight $\alpha$, mass weight $\beta$, and source weight $\tau$ to obtain the best performance. All experiments use a downsampling size of 32 $\times$ 32, and the time discretization number $T$ is chosen between 10 to 30.
\Cref{Tchoice} in supplementary materials discussed how to choose the right $T$.


We quantify the quality of the interpolated images by using the Structural Similarity Index (SSIM) score in some experiments. This score evaluates the similarity between two images in terms of structure, brightness, and contrast, providing a more perceptually relevant assessment of the image quality. It is worth noting that if the SSIM score is equal to one, it means that the two images are identical.
Since we have a sequence of images,
we report the mean of the SSIM score of any two adjacent images.

\subsection{Ablation Study}
\label{ablation}
This ablation study aims to show the improvements brought by our regularization term, particularly when the training dataset is limited to only two data points. We compare our algorithm to a standard baseline autoencoder across different image types: gray-scale, binary, and RGB. We use normal optimal transport based path energy for the first two examples and unbalanced optimal transport based path energy for the last one. The results of these comparisons are presented in \Cref{fig:comparison}. 

Our method produces significantly better results than the baseline autoencoder, which typically yields local alterations, resulting in a cross-dissolving effect in the images. 
In the grayscale image, our technique effectively shifts the pixel density from one corner to the other. In the binary image scenario, our approach redistributes the mass from a central large circle into two distinct circles at opposite corners. For RGB images, our method achieves a smooth gradation in color transitions, as seen in the cartoon face. 
We also provided the SSIM score of the interpolated images as shown in the figure caption.
Even though our method does not always achieve a higher SSIM score, the visual improvement still demonstrates the effectiveness of our regularization term in generating realistic interpolated images that follow the laws of physics.

\begin{figure*}
\centering
\begin{tabular}{cc}
 \begin{subfigure}{\textwidth}
        \centering
        \includegraphics[width=0.4\linewidth]{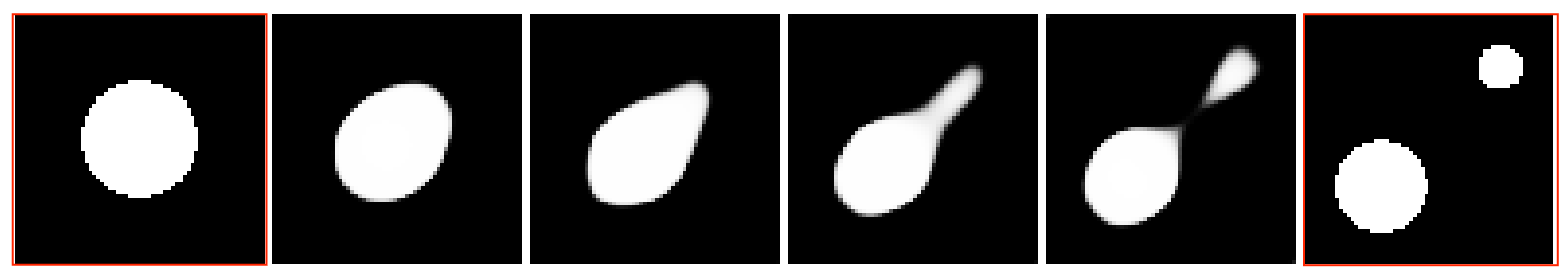}
        $\quad$
        \includegraphics[width=0.4\linewidth]{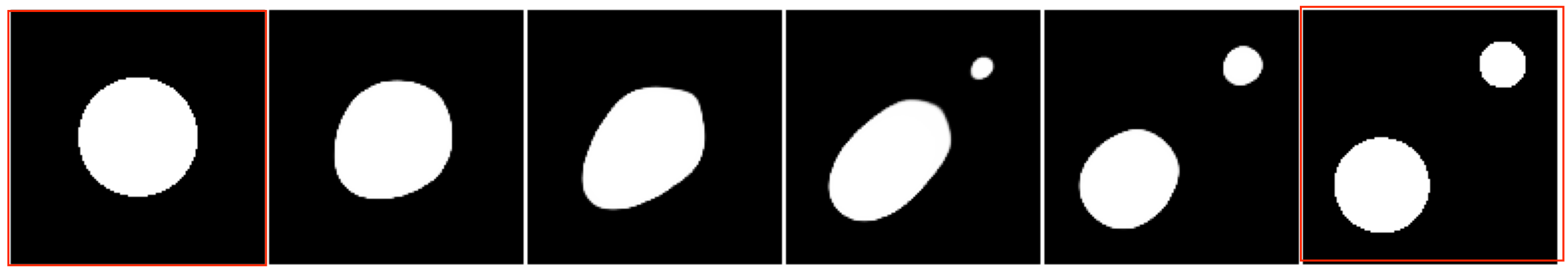}
        \caption{Interpolation results on binary images. Left: Our method (SSIM score: 0.87).
        Right: Baseline autoencoder (SSIM score: 0.91).}
        \label{fig:example2-2}
    \end{subfigure} \\
    \begin{subfigure}{\textwidth}
        \centering
        \includegraphics[width=0.4 \linewidth]{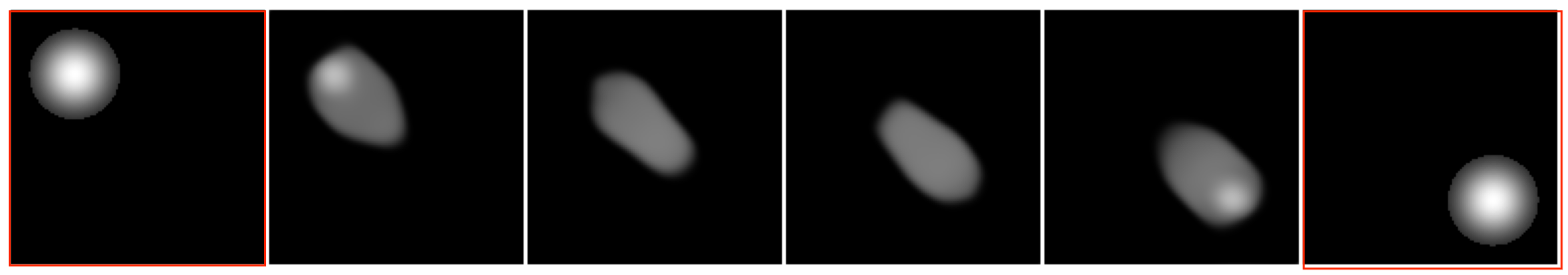}
        $\quad$
         \includegraphics[width=0.4 \linewidth]{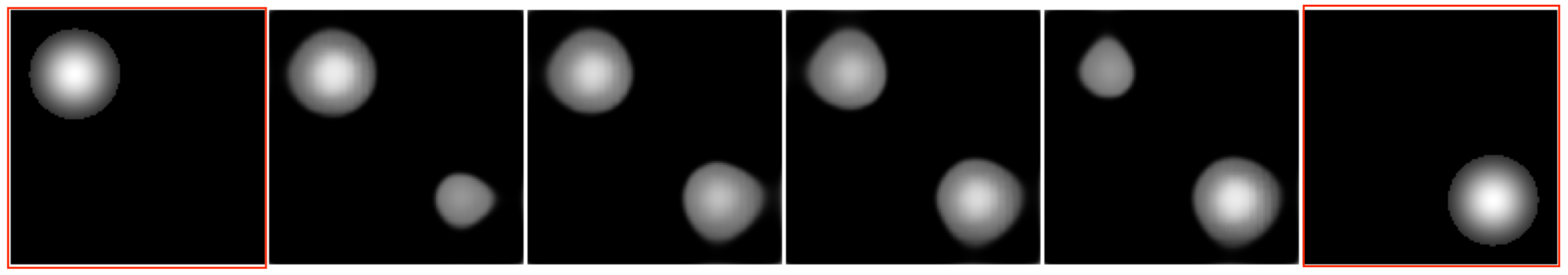}
        \caption{Interpolation results on gray-scale images. Left: Our method (SSIM score: 0.89).
        Right: Baseline autoencoder (SSIM score: 0.93).}
        \label{fig:example1-2}
    \end{subfigure} \\
   
    \begin{subfigure}{\textwidth}
        \centering
        \includegraphics[width=0.4\linewidth]{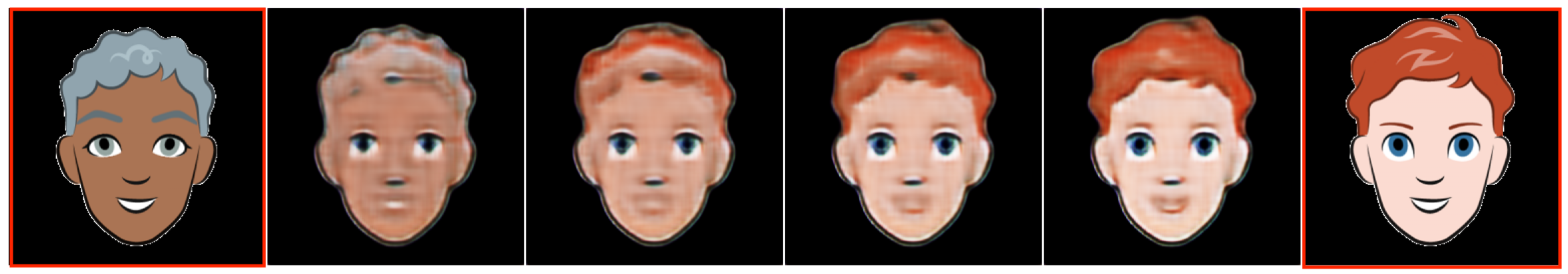}
        $\quad$
        \includegraphics[width=0.4\linewidth]{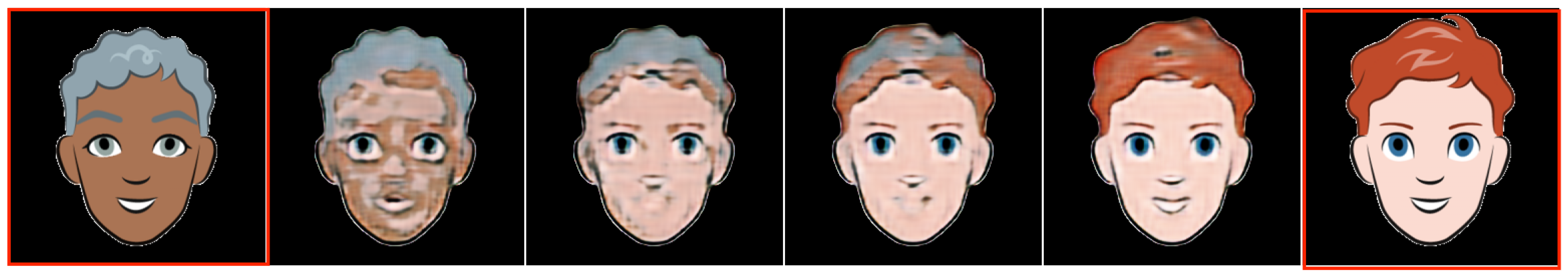}
        \caption{Interpolation results on RGB images. Left: Our method (SSIM score: 0.91).
        Right: Baseline autoencoder (SSIM score: 0.88).}
        \label{fig:example3-2}
    \end{subfigure}
\end{tabular}
\caption{Comparison of our proposed method and the baseline autoencoder method across different image types.}
\label{fig:comparison}
\end{figure*}

\subsection{Interpolation with Obstacle in the Environment }
In this example, we introduce obstacles into the environment. Figure \ref{fig:labyrinth} displays a labyrinth map with a circle moving from the bottom left corner to the right upper corner. The walls of the labyrinth are colored pink and the mass cannot pass across the wall. Subfigure (a) shows the result of our interpolation, which is very smooth, and the circle is able to squeeze through the narrow paths. To compare, we also conducted an experiment using the proximal splitting method to solve the original dynamic OT~\cite{papadakis2014optimal}. The result of this experiment is shown in subfigure (b). The mass transfer follows the geodesic path; however, the mass splits and goes to different paths, resulting in a visually messy interpolation. Additionally, the pixels carrying mass in the interpolated result have values 15 times higher than the original value, leading to a smaller object region. Therefore, the result of our method is much smoother than the one from the optimization method.

\begin{figure*}
\centering
\rotatebox[origin=c]{0}{(a)}\quad
{\includegraphics[width=0.8\textwidth,valign=c]{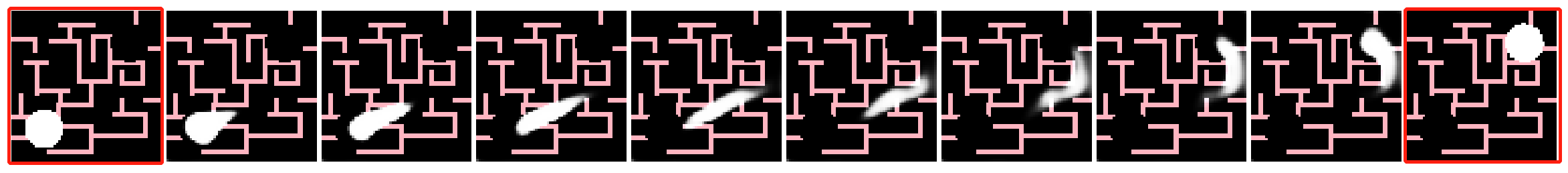}}

\rotatebox[origin=c]{0}{(b)}\quad
{\includegraphics[width=0.8\textwidth,valign=c]{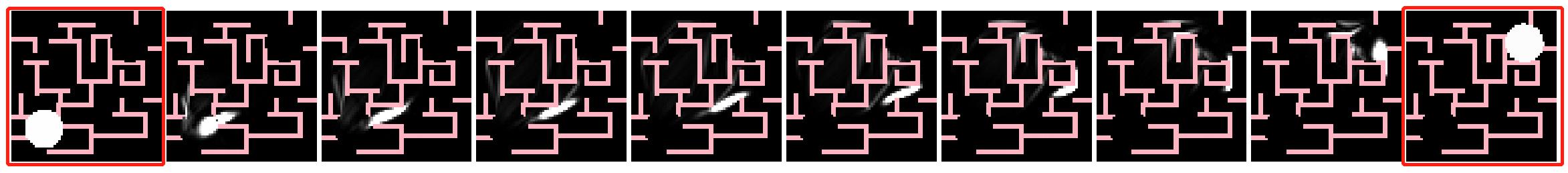}}

\caption{An example of image interpolation when obstacles in the environment (marked pink) are present. (a) The result of our proposed method. (b) The result of the proximal splitting method with a $64 \times 64 \times 120$ space-time discretization grid \cite{papadakis2014optimal}.}
\label{fig:labyrinth}
\end{figure*}

\subsection{Barycenter Problem }

In this section, we evaluate our approach in situations where data is limited, particularly focusing on the barycenter problem, which has recently become a major topic in optimal transport theory. A barycenter, or Wasserstein barycenter, in this context is a distribution that minimizes the weighted sum of Wasserstein distances to a set of given distributions. In image processing, this concept allows us to treat each image as a distribution. Traditional optimization techniques calculate the transportation plan to obtain the barycenter, a process that is computationally intensive. However, using an autoencoder, the barycenter can be efficiently obtained by decoding a convex combination of the latent codes after training. For example, with just four training images represented by latent codes \(z_1, z_2, z_3, z_4\), the barycenter at the \(i\)-th row and \(j\)-th column of a \(6 \times 6\) grid can be generated by decoding:
\small
$$ 
\D \left( (1-\frac{i}{6})(1-\frac{j}{6})z_1 + \frac{i}{6}(1-\frac{j}{6})z_2 + (1-\frac{i}{6})\frac{j}{6} z_3 + \frac{i}{6}\frac{j}{6} z_4 \right). 
$$
\normalsize 
The core is to ensure the reliability of the trained autoencoder in producing meaningful results.

We tested our method on two different barycenter problems, each using four training data points that represented different distributions. The first problem was related to shape analysis and involved four distinct shape images. 
Thus we used a path energy term based on unbalanced optimal transport.
The results, as seen in \Cref{fig:bary1}, showed sharply defined yet smooth barycenters. The second problem focused on position changes, using images of a circle at various locations. 
We used a path energy term based on normal optimal transport.
As shown in \Cref{fig:bary2}, our method was able to effectively maintain the circle's shape while generating it at different positions. Further details on the results obtained with other methods can be found in Appendix 7.4. To conclude, our approach to the barycenter problem successfully combines sharpness with smooth transitions, thus demonstrating its effectiveness.

\begin{figure}
\centering
\includegraphics[width=0.7\linewidth]{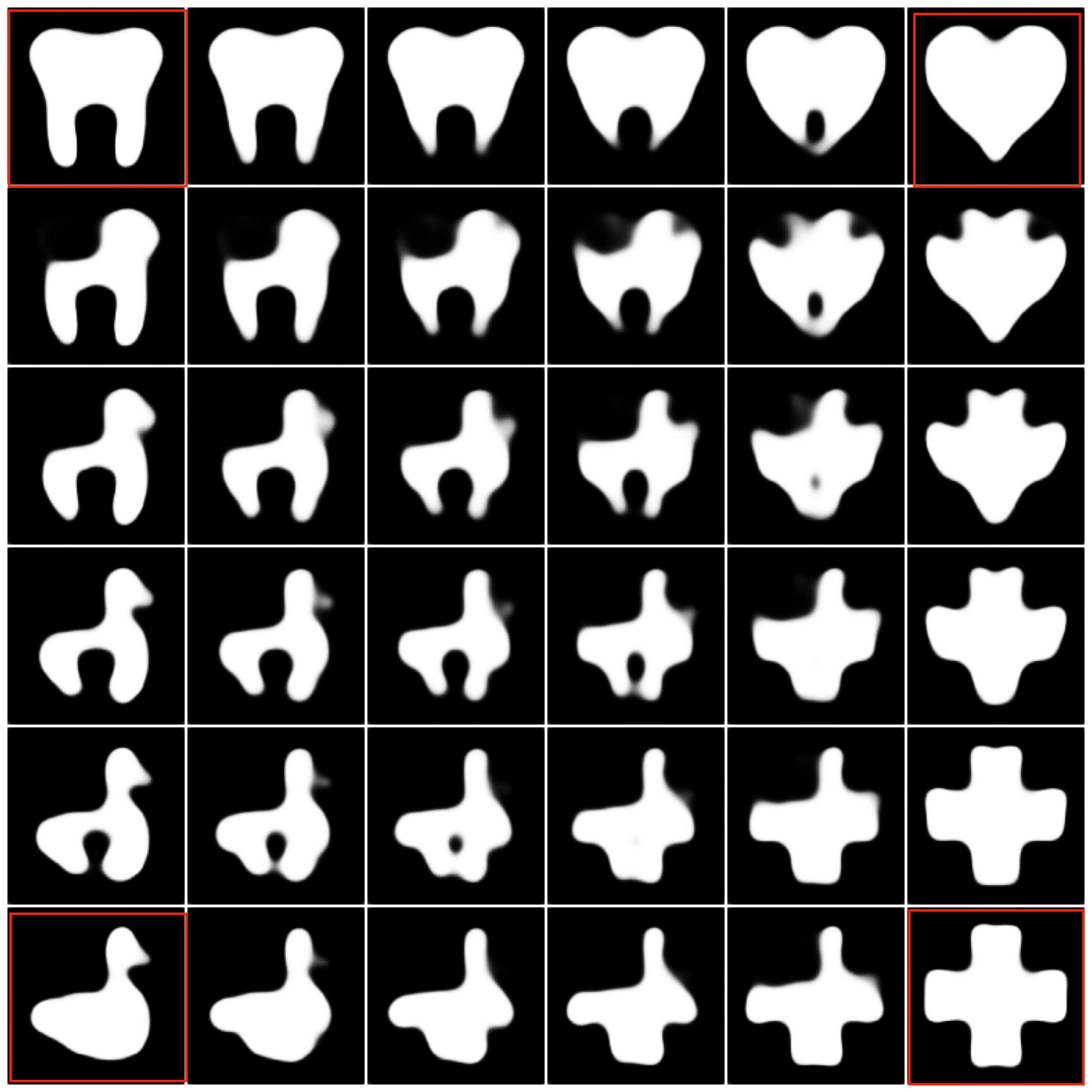}
\caption{\label{fig:bary1}
Barycenter example 1. 
There are only four images (marked red above) in the training dataset. Each remaining image was created by decoding a convex combination of their corresponding latent codes.}
\end{figure}

\begin{figure}
\centering
\includegraphics[width=0.7\linewidth]{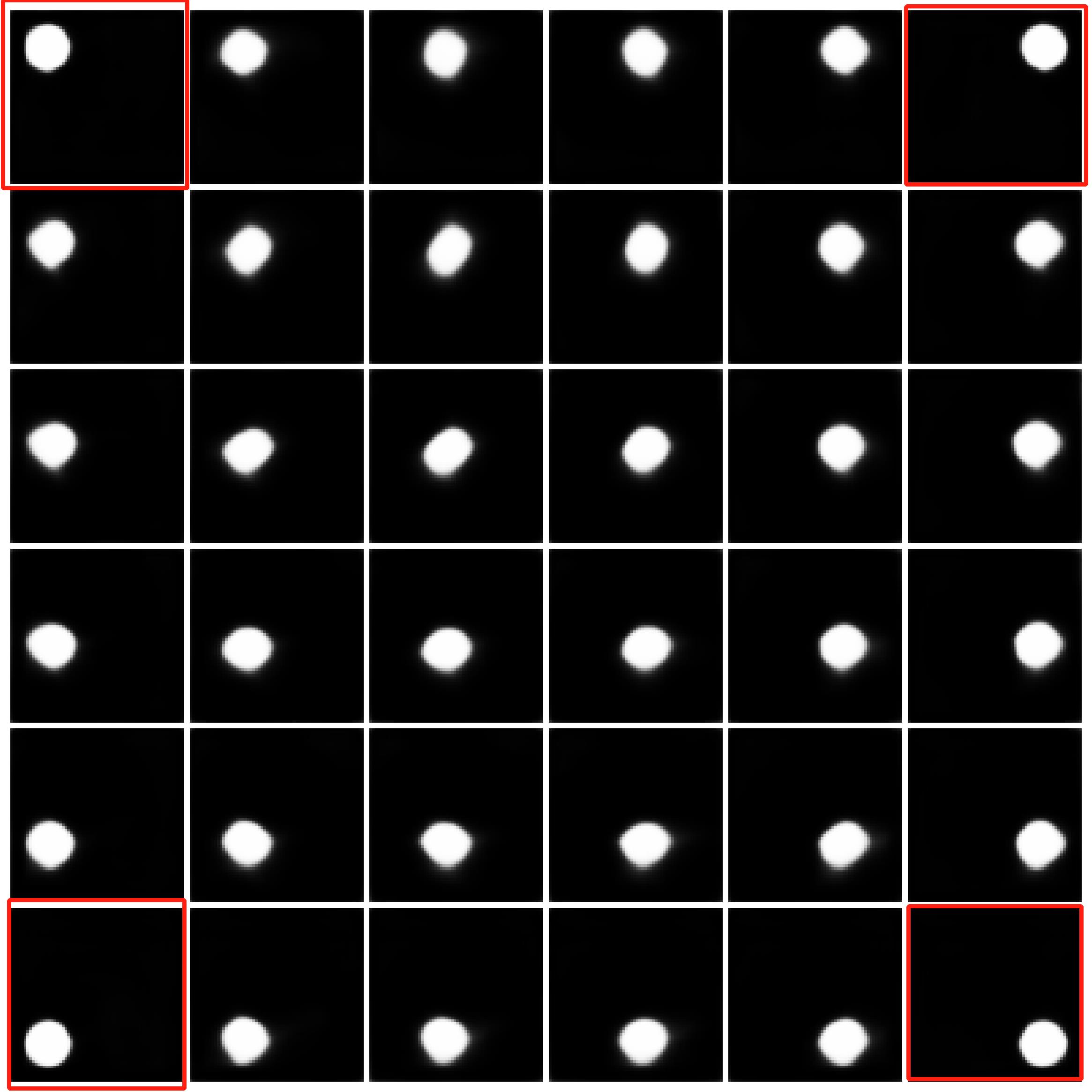}
\caption{\label{fig:bary2}
Barycenter example 2. 
There are only four images (marked red above) in the training dataset. Each remaining image was created by decoding a convex combination of their corresponding latent codes.}
\end{figure}

\subsection{MNIST Dataset}
\begin{figure*}[!htbp]
    \centering
    \begin{subfigure}[b]{0.8\textwidth}
        \centering
        \includegraphics[width=\textwidth]{{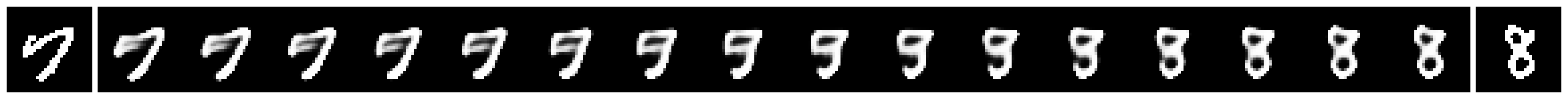}}
        \caption{ Ours (SSIM score mean: 0.96, std: 0.01).}        
        \label{fig:sub1}
    \end{subfigure}
    \begin{subfigure}[b]{0.8\textwidth}
        \centering
        \includegraphics[width=\textwidth]{{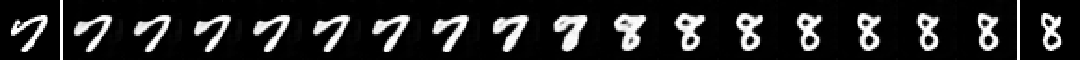}}
        \caption{ ACAI (SSIM score mean: 0.87, std: 0.10). Note that the transition is abrupt in the middle.}        
        \label{fig:sub2}
    \end{subfigure}
    \begin{subfigure}[b]{0.8\textwidth}
        \centering
        \includegraphics[width=\textwidth]{{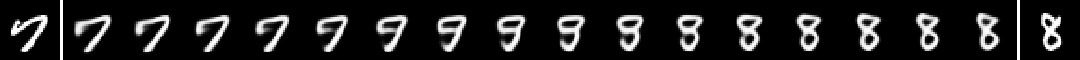}}
        \caption{ VAE (SSIM score mean: 0.9, std 0.10).}        
        \label{fig:sub3}
    \end{subfigure}
    \begin{subfigure}[b]{0.8\textwidth}
        \centering
        \includegraphics[width=\textwidth]{{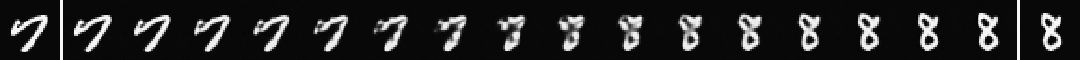}}
        \caption{ Baseline Autoencoder(SSIM score mean: 0.9, std: 0.11). Note that the interpolation results are  blurry. }        
        \label{fig:sub1}
    \end{subfigure}
\caption{Comparison of interpolation results on the MNIST dataset using four different autoencoder methods: Ours, ACAI, VAE, and a baseline autoencoder.
 }   
    \label{fig:mnistcomp}    
\end{figure*}
The purpose of this experiment is to show the interpolation performance of our method for a large real-world dataset benchmark.
The images were padded to size 32 $\times$ 32 for training.
We employ a batch size of 256 and randomly select 9 pairs of paths to penalize their energy term in the loss function instead of all pairs of paths to reduce computational cost.
We used a path energy term based on unbalanced optimal transport.
The results are shown in Figure \ref{fig:MNIST}.
We can see that the morphing between these handwritten digits
follows an optimal transportation path.

We compare our method to other state-of-the-art autoencoders, such as the Variational Autoencoder (VAE) \cite{kingma2014stochastic}, Adversarily Constrained Autoencoder Interpolation (ACAI) \cite{berthelot*2018understanding}, and a baseline autoencoder, as seen in Figure \ref{fig:mnistcomp}. The baseline model is a basic autoencoder with a BCE loss function. Our method and VAE show the most satisfactory visual interpolation results. 
Notably,
our method has a higher SSIM score and a much lower standard deviation. The interpolation using the ACAI method has abrupt transitions, likely due to its generated images being trained to follow the training data. In contrast, the baseline autoencoder usually produces blurry interpolations. Therefore, our method has a significant improvement in the interpolation task among autoencoder-based methods.

We also investigate how the latent space of an autoencoder is impacted by the regularization term. We analyze three distinct methods: the Kullback-Leibler (KL) divergence in VAE, the adversarial term in ACAI, and the path energy term in our proposed method. We draw inspiration from \cite{berthelot*2018understanding} and conduct the same classification and clustering experiments on the MNIST dataset using the latent spaces generated by these different autoencoders after training. The results of the experiments are presented in Tables \ref{tab:classification} and \ref{tab:clustering}, which demonstrate that ACAI outperforms the other methods in both classification and clustering tasks. Our method exhibits moderate performance, surpassing the Baseline. We believe that this result is reasonable. Unlike the KL divergence in VAEs, which enforces prior assumptions on the latent space, our method's path energy term works directly on the pixel space of reconstructed images, rather than on the latent codes. Consequently, while our method guarantees smooth interpolation and follows the principle of least path energy, it does not necessarily disentangle essential representations within the dataset, such as class identity.

\begin{figure}
\centering
\includegraphics[width=0.8\linewidth]{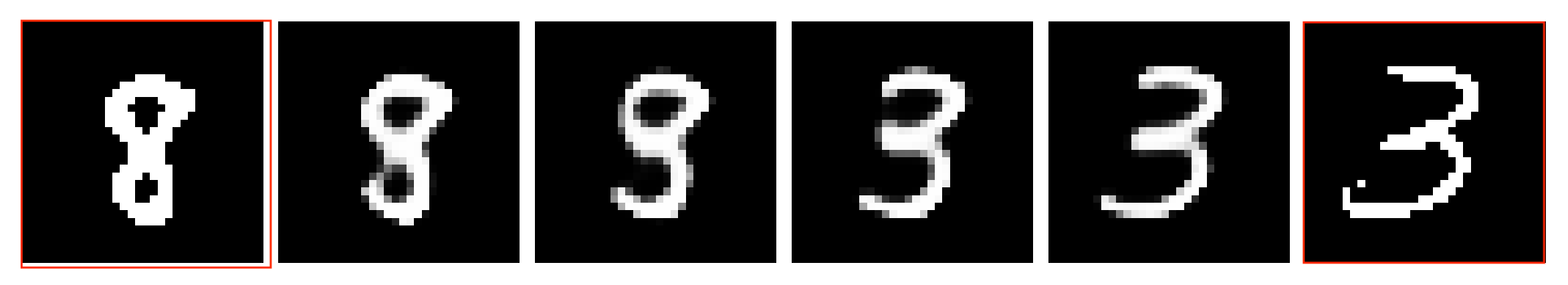}
\includegraphics[width=0.8\linewidth]{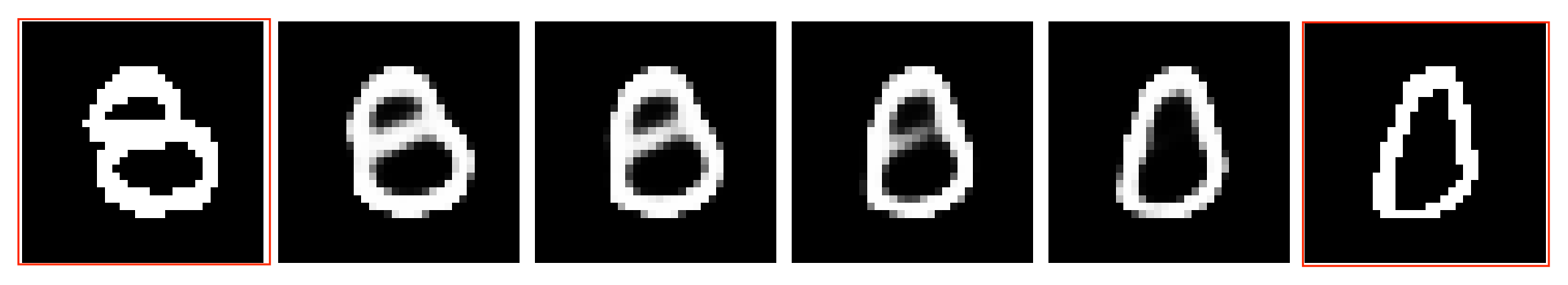}
\includegraphics[width=0.8\linewidth]{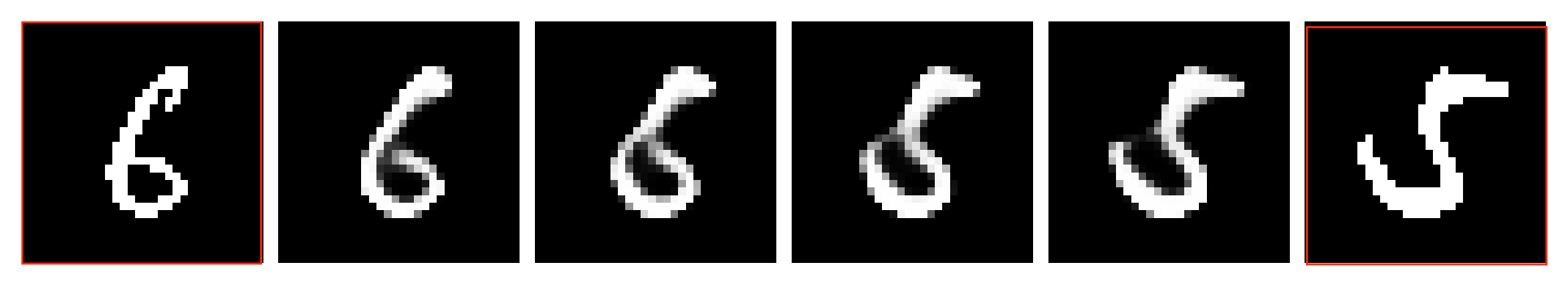}
\includegraphics[width=0.8\linewidth]{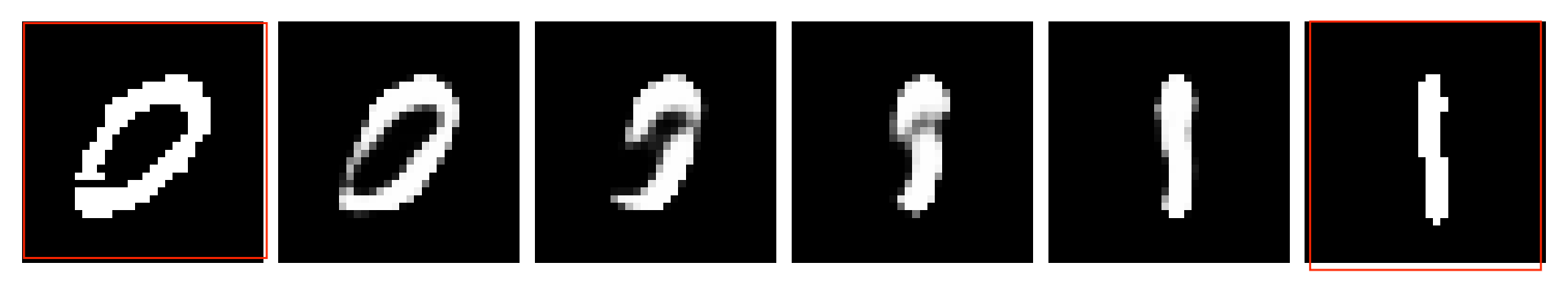}
\caption{\label{fig:MNIST}The interpolation results on MNIST dataset using our proposed method.}
\end{figure}

\begin{table}
  \centering
  \begin{tabular}{@{}lcccc@{}}
    \toprule
    Baseline  & VAE & ACAI & ours \\
    \midrule
    94\% &99\% & 99\% & 97\% \\
    \bottomrule
  \end{tabular}
  \caption{Single-layer classifier accuracy achieved by different autoencoders on MNIST dataset.}
  \label{tab:classification}
\end{table}

\begin{table}
  \centering
  \begin{tabular}{@{}lcccc@{}}
    \toprule
    Baseline  & VAE & ACAI & ours \\
    \midrule
    54\%  &83\% & 96\% & 80\% \\
    \bottomrule
  \end{tabular}
  \caption{ Clustering accuracy for using K-Means on the latent space of different autoencoders on MNIST dataset}
  \label{tab:clustering}
\end{table}

\section{Conclusion}
Recently, there has been a great deal of interest in generative AI models such as ChatGPT and Stable Diffusion. However, these models require a large amount of training data. This paper focuses on generative AI for image generation, where training data is scarce. To address this issue, we proposed a path energy term to regularize the autoencoder, resulting in smooth and realistic interpolation results. Our experiments show that this idea has the potential to combine a robust mathematical model with neural network training. The main challenge in our training is solving linear systems in the path energy term. This could be alleviated by using other advanced techniques such as normalizing flow.
{
    \small
    \bibliographystyle{ieeenat_fullname}
    \bibliography{main}
}

\clearpage
\setcounter{page}{1}
\maketitlesupplementary

Details and source code related to the methodologies applied in this study are available in the GitHub repository: \url{https://github.com/xue1993/Dynamic-optimal-transport}.

\section{the Path Energy Term }
\label{sec:uotandobs}
In this section, we present the specifics of the path energy term when dealing with obstacles and unbalanced optimal transport.

For the path energy term based on original optima transport, we have a closed form as blow
\begin{equation*}
\begin{aligned}
J(\rho) = &\sum_{t=0}^{T-1} b_t^T \left(\mathbf{\nabla \cdot \operatorname{Diag}(\Q_t)}^{-1} \mathbf{\nabla \cdot ^{\top} }\right)^{-1} b_t.
\end{aligned}
\label{equ:quadraticmomentum}
\end{equation*}
Note that there is linear system to solve at each time scale.

When there are obstacles in the environment, we delete the momentum at that positions and solve reduced dimensional linear systems.

For the unbalanced optimal transport case, after solving the KKT system, we have 
\begin{equation*}
\begin{aligned}
J(\rho) = &\sum_{t=0}^{T-1} b_t^T \left(\mathbf{ [ \nabla \cdot \,\, I] \operatorname{Diag}(\Tilde{\Q}_t)}^{-1} \mathbf{[\nabla \cdot \,\, I] ^{\top} }\right)^{-1} b_t,
\end{aligned}
\label{equ:quadraticmomentum}
\end{equation*}
where $\mathbf{I}$ is the identity matrix,
$\Tilde{\Q}_t$ is a extended weight including the weight $\Q_t$ for momentum and the weight for source term defined as follows:
$$
\Q_t^c = \frac{\tau}{\rho_{t,i,j}}.
$$

\section{Additional expriements}
In this section, we present the results of experiments which are not the primary focus of our paper.

\subsection{Boundary Condition}
\label{boundarycondition}
In the dynamic optimal transport model, the continuity equation, a PDE constraint, is of great importance. The boundary condition of the divergence operator is a key factor in the solution. \Cref{fig:boundary} shows a comparison of the results obtained using different boundary conditions. It is evident that the Dirichlet boundary condition yields the best result, while the mass vanishes to the boundaries under other conditions.
Therefore, in this paper, we have chosen to use the Dirichlet boundary condition as default.

\begin{figure}[ht]
    \centering
    \begin{subfigure}[b]{\linewidth}
        \centering
        \includegraphics[width=\linewidth]{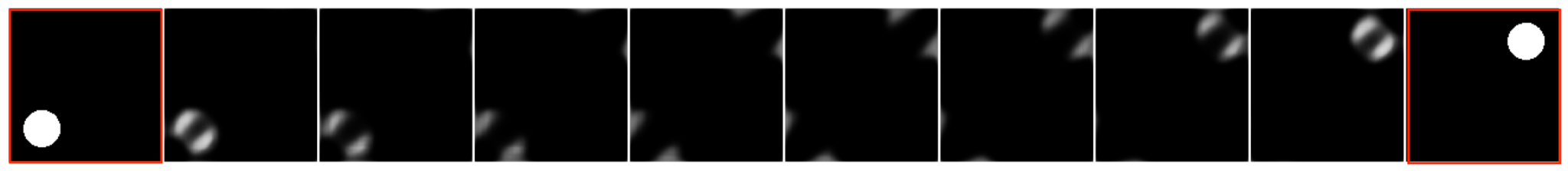}
        \caption{Dirichlet boundary condition}
        
        \label{fig:sub1}
    \end{subfigure}

    \begin{subfigure}[b]{\linewidth}
        \centering
        \includegraphics[width=\linewidth]{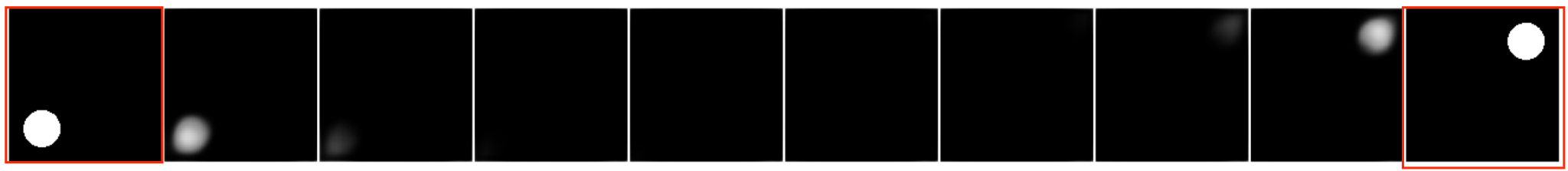}
        \caption{Periodic boundary condition.   }
        \label{fig:sub2}
        
    \end{subfigure}
    
    \begin{subfigure}[b]{\linewidth}
        \centering
        \includegraphics[width=\linewidth]{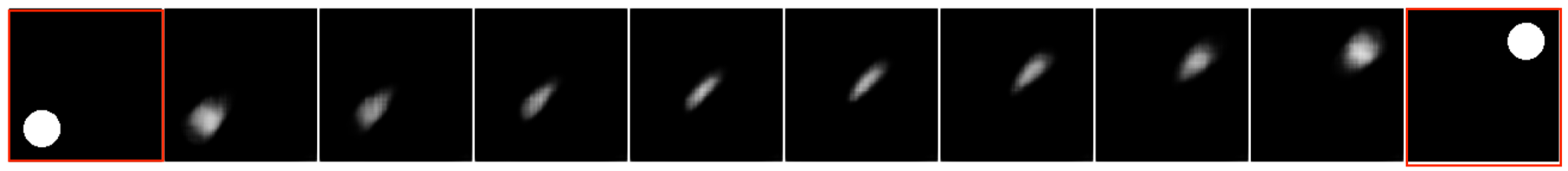}
        \caption{Neumann boundary condition  }
        \label{fig:sub2}
    \end{subfigure}

    \caption{The comparison result when trained with different boundary conditions}

    \label{fig:boundary}
    
\end{figure}

\subsection{Choice of T}
\label{Tchoice}
The discretion of time is of great importance to the interpolation result.
The selection of $T$ should be chosen depending on the maximum distance the pixels traverse. 
For example, if the pixels move a maximum of 10 pixels distance, then $T$ should be set to 10.

\subsection{the Comparison between OT and Unbalanced OT}
\label{otvsuot}
As we discussed earlier,
unbalanced OT is proposed to address the unbalanced mass between the source image and the target image.
It also makes it more suitable for shape-change dominated interpolation problems.
We show a comparison in \Cref{fig:comp}.
Our method based on the path energy term of UOT yields the best result, while OT and baseline autoencoders produce unsmooth or unrealistic transportation.

\begin{figure*}[ht]
    \centering
    \begin{subfigure}[b]{0.9\textwidth}
        \centering
        \includegraphics[width=0.9\textwidth]{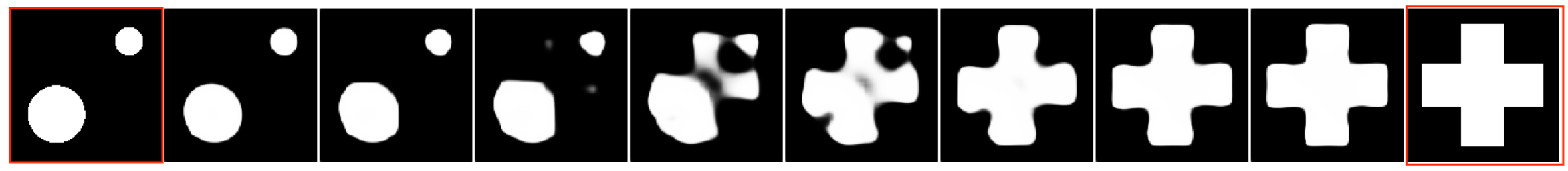}
        \caption{Baseline autoencoder.}
        
        \label{fig:sub1}
    \end{subfigure}

    \begin{subfigure}[b]{0.9\textwidth}
        \centering
        \includegraphics[width=0.9\textwidth]{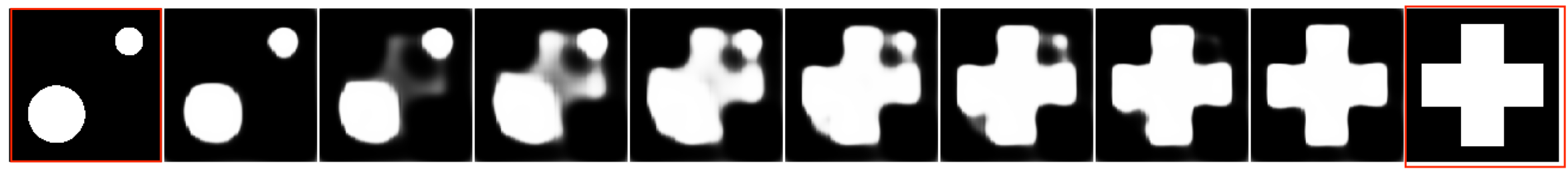}
        \caption{Our method with path energy term based on OT.   }
        \label{fig:sub2}
    \end{subfigure}

        \begin{subfigure}[b]{0.9\textwidth}
        \centering
        \includegraphics[width=0.9\textwidth]{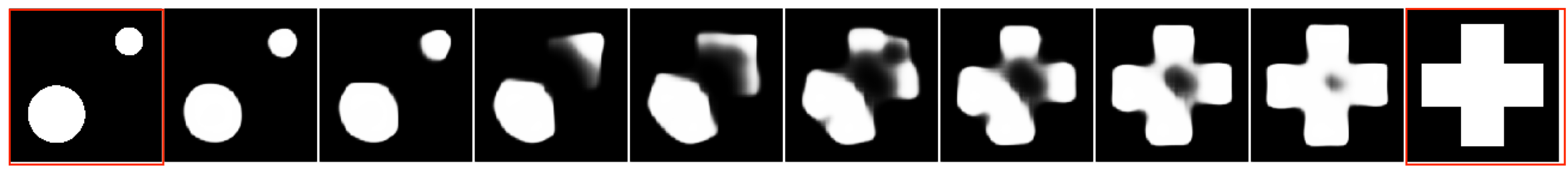}
        \caption{Our method with path energy term based on UOT.   }
        \label{fig:sub2}
    \end{subfigure}
\caption{Comparative analysis of image sequence interpolation. 
Given the images marked in red, the interpolated images are generated by decoding a linear combination of their latent codes after training.  }
    
    \label{fig:comp}
    
\end{figure*}

\subsection{Barycenter Problem}
In this section, we show the Barycenter results using other state of art methods, including MMOT\cite{zhou2022efficient} and convolutional Wasserstein barycenters in POT package\cite{flamary2021pot}.
Our result is shown in Figure 4 in section 4.
We can see that the result from POT is blurred, and the result of MMOT is very sharp but complicated.
Among them, our barycenter result is very fluid and smooth.

\begin{figure*}
  \centering
  \begin{subfigure}{0.5\linewidth}
  \includegraphics[width=0.85\textwidth]{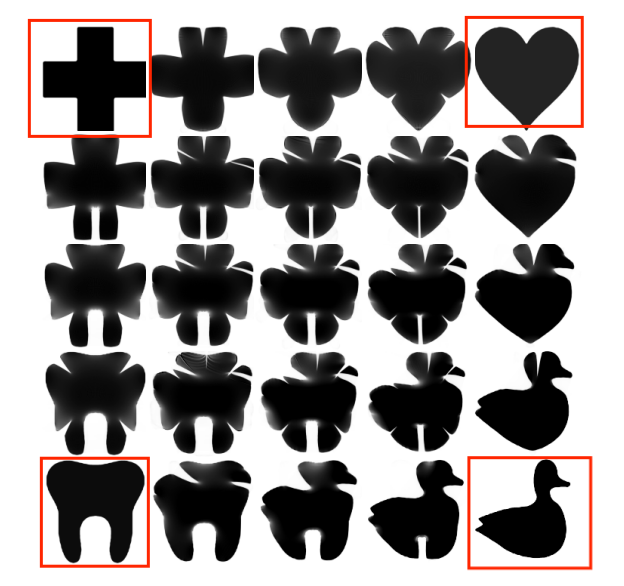}
    \caption{MMOT}
    \label{fig:short-a}
  \end{subfigure}
  \hfill
  \begin{subfigure}{0.45\linewidth}
  \includegraphics[width=0.85\textwidth]{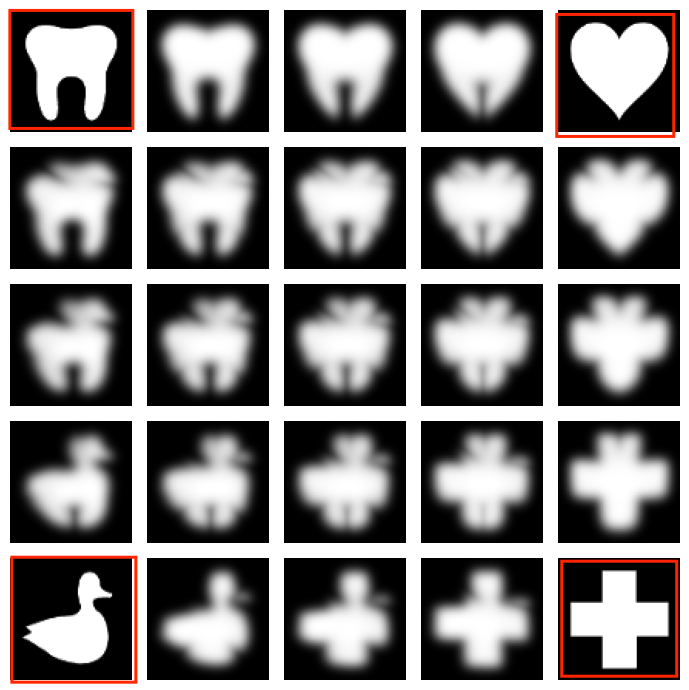}
    \caption{POT}
    \label{fig:short-b}
  \end{subfigure}
  \caption{Barycenter results using other methods.  }
  \label{fig:short}
\end{figure*}

\subsection{Auxiliary Training Data}
\label{auxiliary}
\label{sec:auxi}

One challenge in solving dynamic OT is the computation cost,
and this section explores whether the neural network learning nature of our method can bring benefits to finding the geodesic path.
To investigate this question, we design a simple but instructive comparison experiment.
Specifically,
we use the image of a circle in the upper left corner as the source image $x_1$ and use the image of a circle in the lower right corner as the target image $x_2$, shown in Figure \ref{fig:auxi}.
The aim is to generate the geodesic path between $x_1$ and $x_2$. 
We also create images with a center circle at every point in time, with a total of 1936 images.
The first experiment uses $x_1$ and $x_2$ as the only training data,
while the second experiment uses all training data with $1,2$ as the only choice of $i,j$ pair in the regularization term. 
Thus, the second experiment has 1934 auxiliary data.
The batch size is 500.
In both examples,
the blocks in both encoder and decoder are repeated 4 times, resulting in a smaller latent dimension $16\times2\times2$.
The results (Figure \ref{fig:auxi}) indicate that the auxiliary data significantly improve the interpolation quality. 
Without auxiliary data, the interpolation is blurrier at epoch 200 compared to the result at epoch 50 with auxiliary data. 
Additionally, the final interpolation result is less accurate without auxiliary data. 
Therefore, the inclusion of auxiliary data improves the autoencoder's efficiency and accuracy in generating the geodesic path.

\begin{figure*}
\begin{flushright}

\rotatebox[origin=c]{0}{Epoch 200}\quad
{\includegraphics[width=0.85\textwidth,valign=c]{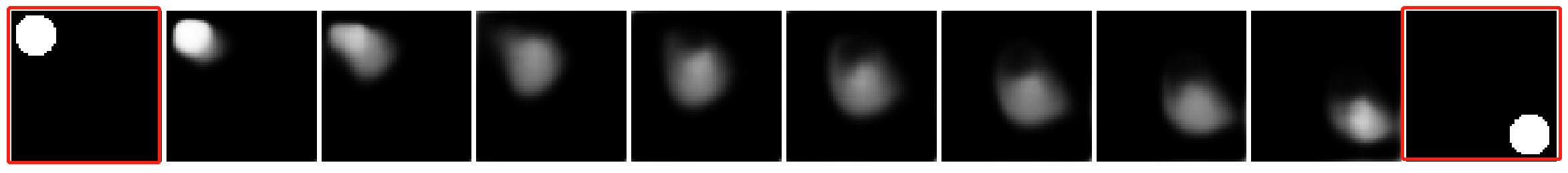}}

\rotatebox[origin=c]{0}{ 1000}\quad
{\includegraphics[width=0.85\textwidth,valign=c]{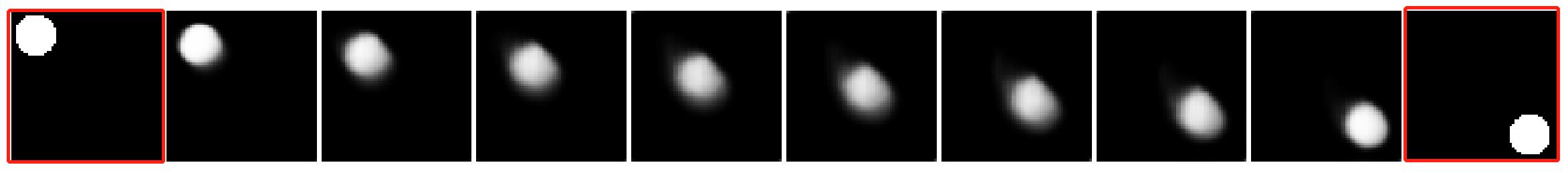}}

\begin{center} (a)  \end{center}

\rotatebox[origin=c]{0}{Epoch 50\,}\quad
{\includegraphics[width=0.85\textwidth,valign=c]{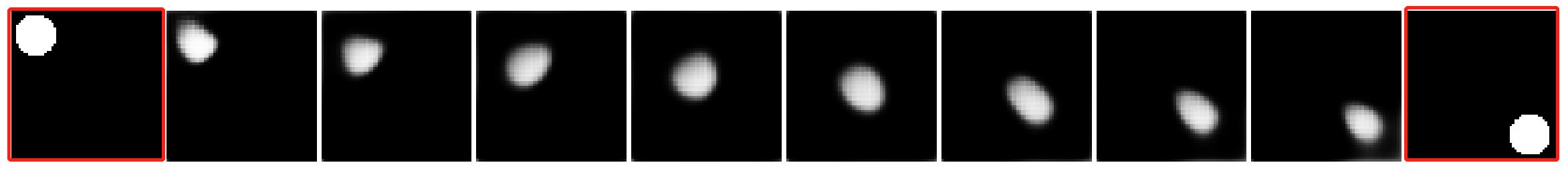}}

\rotatebox[origin=c]{0}{ 250}\quad
{\includegraphics[width=0.85\textwidth,valign=c]{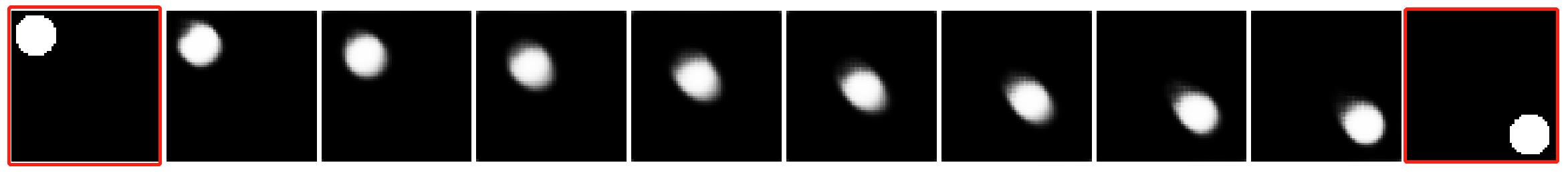}}

\begin{center} (b)\end{center}
\end{flushright}

\caption{Comparison of interpolation results at different training epochs. (a) Results with only two training data points at epochs 200 and 1000. (b) Results with auxiliary data at epochs 50 and 250. Detailed descriptions of the training can be found in Section \ref{sec:auxi}.}

\label{fig:auxi}
\end{figure*}

\end{document}